\numberwithin{equation}{section}
\newtheorem{remark}[equation]{Remark}
\newtheorem{lema}[equation]{Lemma}
\newtheorem{teo}[equation]{Theorem}
\newtheorem{coro}[equation]{Corollary}
\begin{document}
\title[Riesz transforms]{On dimension-free and  potential-free estimates\\ for Riesz transforms
\\
associated with Schr\"odinger operators}
\author{Jacek Dziuba\'{n}ski }
\address{Instytut Matematyczny, Uniwersytet Wroc\l awski, pl. Grunwaldzki 2/4, 50-384 Wroc\l aw, Poland} \email{jdziuban@math.uni.wroc.pl }
\subjclass[2020]{42B20, 35J10 (primary);  47D08, 47G10 (secondary)}
\keywords{Riesz transforms, Schr\"odinger operators}
\thanks{}

\maketitle

\begin{abstract}
 Let  $L=-\Delta + V(x)$ be a Schr\"odinger operator on $\mathbb R^d$,   where $V(x)\geq 0$, $V\in L^1_{\rm loc} (\mathbb R^d)$. We give a short proof of dimension free $L^p(\mathbb R^d)$ estimates, $1<p\leq 2$, for the vector of the Riesz transforms $$\big(\frac{\partial}{\partial x_1}L^{-1/2}, \frac{\partial}{\partial x_2}L^{-1/2},\dots,\frac{\partial}{\partial x_d}L^{-1/2}\Big).$$ The constant in the estimates does not depend on  the potential $V$. We simultaneously provide a short proof of the weak type $(1,1)$ estimates for $\frac{\partial}{\partial x_j}L^{-1/2}$.
\end{abstract}

%%%%%%%%%%%%%%%%%%%%%%%%%%%%%%%%%%%%%%%%%%%%%%%%%%%%%%

\section{Introduction and statement of the result}

Let $R_jf= \frac{\partial}{\partial x_j} (-\Delta)^{-1/2}f$ denote the Riesz transform in $\mathbb R^d$. It was proved in Stein \cite{Stein1983} that for any $1<p<\infty$, there is a constant $C_p$ independent of $d$ such that
\begin{equation}
    \label{eq:Stein}
     \Big\| \Big(\sum_{j=1}^d | R_j f|^2\Big)^{1/2} \Big\|_{L^p(\mathbb R^d)}\leq C_p\| f\|_{L^p(\mathbb R^d)}.
\end{equation}
It is known that for $1<p\leq 2$, the best constant in \eqref{eq:Stein} can be estimated from above by $C(p-1)^{-1}$ (see e.g. \cite{B}, \cite{IM}).

In this paper we consider the Riesz transforms $\mathcal R_j=\frac{\partial}{\partial x_j} L^{-1/2}$, where  $L=-\Delta+V$ is a Schr\"odinger operator in $\mathbb R^d$ with a non-negative potential  $V\in L^1_{\rm loc}(\mathbb R^d)$.  Our goal is to prove the following theorem which, according to the author's knowledge, is new.
\begin{teo}\label{teo} Suppose $1<p\leq 2$ and $d\geq 3$.
    Let $C_p$ be a constant for which the dimension-free $L^p$ estimates \eqref{eq:Stein} for the classical Riesz transforms hold. Then
    \begin{equation}\label{free} \Big\| \Big(\sum_{j=1}^d |\mathcal R_j f|^2\Big)^{1/2} \Big\|_{L^p(\mathbb R^d)}\leq 2^{(2-p)/p} C_p\| f\|_{L^p(\mathbb R^d)}.
    \end{equation}
\end{teo}

Estimates   of Riesz transforms associated with Schr\"odinger operators on  the Euclidean spaces and Riemannian manifolds were considered by many authors. For some  results we refer  the reader to \cite{AO, AB, De, DV, DOY, HRST, K, KW, L-P, Sh, Sikora2004, T, UZ2009} and references therein.  However, all the papers which were  dealing with  dimension-free $L^p$ bounds  for the Riesz transforms associated with Schr\"odinger operators  required some additional assumptions on the non-negative potential $V$ to conclude \eqref{free} with a constant $C_p'$ independent of $d$.
For example in \cite{UZ2009} the authors obtained \eqref{free} for potentials $V$ being
polynomials satisfying so called Fefferman condition, however,  the constant $C_p'$  depended on the degree of $V$.   In the case of the harmonic oscillator   $\mathbf L=-\Delta+\|x\|^2$ in $\mathbb R^d$, the $L^p$-bounds independent of the dimension $d$ for the  vector of the Riesz transforms $(\frac{\partial}{\partial x_1}\mathbf L^{-1/2},x_1\mathbf L^{-1/2}, \frac{\partial}{\partial x_2}\mathbf L^{-1/2},x_2\mathbf L^{-1/2},\dots, \frac{\partial}{\partial x_d}\mathbf L^{-1/2},x_d\mathbf L^{-1/2})$ were proved in \cite{HRST} and \cite{L-P}, see also \cite{DV}. Let us note that these results can be also achieved  by applying the dimension-free $L^p$ estimates for the Riesz transforms on Heisenberg groups proved in \cite{CMZ} and the transference method of Coifman and Weiss \cite{CW}.

 The proof of Theorem \ref{teo} is based on the factorization $\mathcal R_j=R_j(-\Delta)^{1/2}L^{-1/2}$, the inequality \eqref{eq:Stein}, and  $L^p$ bounds for $(-\Delta)^{1/2}L^{-1/2}$ (see Lemma \ref{lemma1}). The bounds are obtain in a short way by utilizing the perturbation (Duhamel) formula.

Finally we want to remark that the range $1<p\leq 2$ is optimal for the $L^p$-boundedness of the Riesz transforms associated with $L=-\Delta+V$,   $V\geq 0$ (see Section \ref{optimal}).

 \section{Proof of Theorem \ref{teo}}

Let $h_t(x-y)$ and $k_t(x,y)$ denote the integral kernels of the heat semigroup $H_t=e^{t\Delta}$ and the Schr\"odinger semigroup $K_t=e^{-tL}$ respectively. It is well-known and follows e.g.from the Feynman-Kac formula that
 \begin{equation}
     \label{eq11}
     0\leq k_t(x,y)\leq h_t(x-y)=(4\pi t)^{-d/2} \exp(-\|x-y\|^2/4t).
 \end{equation}
Negative  powers of the generators are defined by:
\begin{equation}\begin{split}\nonumber
   & (-\Delta)^{-1} f(x)= \int_0^\infty H_tf(x)\, dt
   =\int_0^\infty \int_{\mathbb R^d} h_t(x-y) f(y)\, dy\, dt=:\int_{\mathbb R^d} \Gamma_0(x-y)f(y)\, dy, \\
   & L^{-1}f(x)=\int_0^\infty K_tf(x)\, dt =\int_0^\infty \int_{\mathbb R^d} k_t(x,y)f(y)\, dy \, dt=: \int_{\mathbb R^d} \Gamma(x,y)f(y)\, dy,\\
    & (-\Delta)^{-1\slash 2} f= c_1 \int_0^\infty H_tf\,\frac{ dt}{\sqrt{t}}= c_1 \int_0^\infty \int_{\mathbb R^d} h_t(x-y)f(y)\, dy \frac{dt}{\sqrt{t}}
   =:\int_{\mathbb R^d} \widetilde\Gamma_0(x-y)f(y)\, dy ,  \\
   & L^{-1\slash 2} f= c_1\int_0^\infty K_tf\,\frac{ dt}{\sqrt{t}}=c_1\int_0^\infty \int_{\mathbb R^d} k_t(x-y)f(y)\, dy \frac{dt}{\sqrt{t}}
   =:\int_{\mathbb R^d} \widetilde \Gamma (x,y)f(y)\, dy,\\
\end{split}\end{equation}
where $ c_1=\Gamma (1\slash 2)^{-1}=\pi^{-1/2}.$
 Clearly,

 \begin{equation}\begin{split}\label{eq22.1}
  0\leq \tilde \Gamma (x,y)\leq \tilde \Gamma_0(x-y)=c_d|x-y|^{1-d}, \ \ \ 0\leq  \Gamma(x,y)\leq \Gamma_0(x-y),
\end{split}\end{equation}
\begin{equation}
    \label{composition}
    \int_{\mathbb R^d} \tilde \Gamma(x,z)\tilde\Gamma (z,y)\, dz =\Gamma (x,y).
\end{equation}
\begin{lema}\label{int1}
    $$\int_{\mathbb R^d} V(z)\Gamma (z,y)\, dz\leq 1. $$
\end{lema}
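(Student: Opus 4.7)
\emph{Proof sketch.} The idea is that $\Gamma(\cdot,y)$ is the Green function of $L$, so $(-\Delta+V)\Gamma(\cdot,y)=\delta_y$ in the distributional sense; integrating over the first variable and noting that $\int(-\Delta_z)\Gamma(z,y)\,dz$ encodes the ``escape at infinity'' of the Brownian motion killed by the potential $V$ (hence is non-negative), one expects $\int V(z)\Gamma(z,y)\,dz\leq 1$. I plan to realize this rigorously through a conservation-of-mass identity for the perturbed semigroup $K_t$.

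Concretely, for $f\in C_c^\infty(\mathbb R^d)$ with $f\geq 0$, I would start from Duhamel's identity
$$K_t f=H_t f-\int_0^t H_{t-s}\bigl(VK_s f\bigr)\,ds,$$
integrate in $x\in\mathbb R^d$, and use that $H_r$ preserves the $L^1$-integral of non-negative functions to obtain
$$\int_{\mathbb R^d} K_tf(x)\,dx=\int_{\mathbb R^d} f(x)\,dx-\int_0^t\!\!\int_{\mathbb R^d} V(z)K_sf(z)\,dz\,ds.$$
Since $K_tf\geq 0$, dropping the left-hand side and letting $t\to\infty$ yields
$$\int_0^\infty\!\!\int_{\mathbb R^d} V(z)K_sf(z)\,dz\,ds\leq\int_{\mathbb R^d} f(x)\,dx.$$
By Fubini together with the symmetry $k_s(z,y)=k_s(y,z)$ this rewrites as
$$\int_{\mathbb R^d} f(y)\Bigl(\int_{\mathbb R^d} V(z)\Gamma(z,y)\,dz\Bigr)dy\leq\int_{\mathbb R^d} f(y)\,dy,$$
and the arbitrariness of $f\geq 0$ produces the desired pointwise bound for a.e.\ $y$.

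The main delicate point is justifying Duhamel's identity and the various interchanges under the minimal assumption $V\in L^2_{\rm loc}$, where $V K_s f$ need not be in any nice space. I would handle this by approximation: prove the bound first for the bounded, compactly supported truncations $V_n:=\min(V,n)\mathbf 1_{B(0,n)}$, for which the semigroup theory and Duhamel's formula are completely classical, and then pass to the limit via Fatou's lemma applied to $V_n\Gamma^{(n)}(\cdot,y)\to V\Gamma(\cdot,y)$ pointwise. Here $V_n\nearrow V$ a.e., while the Green's functions $\Gamma^{(n)}(z,y)$ converge to $\Gamma(z,y)$ by dominated convergence on each slice $k_t^{(n)}(z,y)\searrow k_t(z,y)$, the domination being $k_t^{(n)}\leq h_t$ with integrated majorant $\Gamma_0(z-y)<\infty$ --- which is precisely where the assumption $d\geq 3$ enters.
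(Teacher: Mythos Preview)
Your argument is correct and is essentially the same as the paper's: both integrate the Duhamel/perturbation identity in space, drop the non-negative semigroup term, and let $t\to\infty$. The paper, however, works directly at the kernel level with a fixed $y$---it integrates
\[
h_t(x-y)=k_t(x,y)+\int_0^t\!\!\int_{\mathbb R^d} h_{t-s}(x-z)V(z)k_s(z,y)\,dz\,ds
\]
in $dx$ and applies monotone convergence---which bypasses your test-function/symmetry step and makes the approximation by bounded $V_n$ (and hence the appeal to $d\ge 3$) unnecessary, since all integrands are non-negative and Tonelli justifies every interchange.
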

\begin{proof} The lemma is well known (see e.g., \cite{AB}, \cite{UZ2009}). Its easy proof is a consequence of the  perturbation (Duhamel) formula:
    \begin{equation}\label{eq:pert}
        h_t(x-y)=k_t(x,y)+\int_0^t \int_{\mathbb R^d} h_{t-s}(x-z)V(z)k_{s} (z,y)\, dz\, ds.
    \end{equation}
   Indeed,  integrating \eqref{eq:pert} with respect to $dx$ and using \eqref{eq11}, we get
    \begin{equation}
        1 = \int_{\mathbb R^d}k_t(x,y)\, dx+  \int_{\mathbb R^d} \int_0^t V(z) k_s (z,y)\,  ds\,  dz\geq  \int_{\mathbb R^d} V(z) \int_0^t k_s (z,y)\,  ds\,  dz .
    \end{equation}
    Letting $t\to\infty$ and using the monotone convergence, we obtain the lemma.
\end{proof}

 For a reasonable function $f$ (for example for $f$ being in the domain of the relevant quadratic form (see \eqref{eq:form}),  let
 \begin{equation}\begin{split}\nonumber
   &(-\Delta)^{1\slash 2} f=c_2\int_0^\infty  (H_tf-f)\frac{dt}{t^{3\slash 2}}, \\
   &L^{1\slash 2} f= c_2\int_0^\infty  (K_tf-f)\frac{dt}{t^{3\slash 2}},\\
\end{split}\end{equation}
 where  $  c_2=\Gamma (-1\slash 2)^{-1}=-(2\sqrt{\pi})^{-1}$.
\begin{lema}\label{lemma1}
 \begin{equation}
     \label{eq:L2}
     \|(-\Delta)^{1/2} L^{-1/2} f\|_{L^2(\mathbb R^d)} \leq \| f\|_{L^2(\mathbb R^d)}, 
 \end{equation}
   \begin{equation}\label{eq2.51}
   \| (-\Delta)^{1\slash 2}L^{-1\slash 2}f\|_{L^1(\mathbb R^d)}\leq 2\| f\|_{L^1(\mathbb R^d)}.
   \end{equation}
\end{lema}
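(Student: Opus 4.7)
The $L^2$ bound \eqref{eq:L2} I would obtain at once from the quadratic form
\[
Q(u,u)\;=\;\int_{\mathbb{R}^d}|\nabla u|^2\,dx+\int_{\mathbb{R}^d}V|u|^2\,dx\;=\;\|L^{1/2}u\|_{L^2(\mathbb{R}^d)}^2
\]
associated with $L$. Since $V\geq 0$, we have $\|(-\Delta)^{1/2}u\|_{L^2}=\|\nabla u\|_{L^2}\leq\|L^{1/2}u\|_{L^2}$, and taking $u=L^{-1/2}f$ together with $L^{1/2}L^{-1/2}f=f$ yields \eqref{eq:L2}.

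For the $L^1$ inequality \eqref{eq2.51}, the plan is to write
\[
(-\Delta)^{1/2}L^{-1/2}f\;=\;f\,-\,\bigl(L^{1/2}-(-\Delta)^{1/2}\bigr)L^{-1/2}f
\]
using $L^{1/2}L^{-1/2}=I$, and to prove that the second summand has $L^1$-norm at most $\|f\|_{L^1}$; the splitting $f=f^+-f^-$ reduces matters to $f\geq 0$. I would then combine the subordination formula
\[
\bigl(L^{1/2}-(-\Delta)^{1/2}\bigr)g\;=\;c_2\int_0^\infty(K_tg-H_tg)\,t^{-3/2}\,dt
\]
with the perturbation identity \eqref{eq:pert}, which for $g\geq 0$ yields $H_tg-K_tg=\int_0^t H_{t-s}(VK_sg)\,ds\geq 0$. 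The resulting double integral has pointwise non-negative integrand, so one may pull the $L^1$-norm inside, use $\|H_{t-s}u\|_{L^1}=\|u\|_{L^1}$ for $u\geq 0$, and invoke Tonelli to interchange the $s$- and $t$-integrations; the inner integral evaluates to $\int_s^\infty t^{-3/2}\,dt=2s^{-1/2}$. Since $-2c_2=c_1$, the remaining factors reassemble precisely the subordination formula defining $L^{-1/2}$, so that after substituting $g=L^{-1/2}f$ one arrives at
\[
\bigl\|\bigl(L^{1/2}-(-\Delta)^{1/2}\bigr)L^{-1/2}f\bigr\|_{L^1}\;=\;\iint_{\mathbb{R}^d\times\mathbb{R}^d} V(z)\,\Gamma(z,y)\,f(y)\,dy\,dz,
\]
which is at most $\|f\|_{L^1}$ by Lemma~\ref{int1}. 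The triangle inequality then delivers \eqref{eq2.51} with constant~$2$.

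The main obstacle is technical rather than conceptual: one has to justify the subordination representation of $L^{1/2}g-(-\Delta)^{1/2}g$ as a convergent integral, and the Tonelli swap, on a dense subclass of $L^1(\mathbb{R}^d)$ (e.g.\ non-negative $f\in L^1\cap L^2$ for which $L^{-1/2}f$ lies in the form domain of $Q$), and then to extend \eqref{eq2.51} to all of $L^1(\mathbb{R}^d)$ by density. Everything else is bookkeeping with non-negative kernels.
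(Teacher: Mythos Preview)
Your proof is correct and follows essentially the same route as the paper. The $L^2$ argument is identical; for the $L^1$ bound the paper writes the same decomposition $(-\Delta)^{1/2}L^{-1/2}f=f+c_2\int_0^\infty(H_t-K_t)L^{-1/2}f\,t^{-3/2}\,dt$, applies the perturbation formula \eqref{eq:pert}, performs the same Tonelli swap producing $\int_s^\infty t^{-3/2}\,dt=2s^{-1/2}$, and then uses Lemma~\ref{int1}; the only cosmetic difference is that the paper phrases the computation in terms of the non-negative integral kernel $W(x,u)$ and integrates in $x$ (together with the identity $\tilde\Gamma\circ\tilde\Gamma=\Gamma$), whereas you take $L^1$-norms directly, reduce to $f\ge 0$, and recognise $L^{-1/2}L^{-1/2}=L^{-1}$.
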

\begin{proof} The proof of well known estimates  \eqref{eq:L2} follows from quadratic form representations  of $-\Delta$ and $L$:
\begin{equation}\begin{split}\label{eq:form}
& \| (-\Delta)^{1/2}  f\|_{L^2(\mathbb R^d)}^2 =\int_{\mathbb R^d} (-\Delta)^{1/2}f \cdot \overline{ (-\Delta)^{1/2}f }
=\int_{\mathbb R^d} \nabla f\cdot \overline{\nabla f}\\
&\leq \int_{\mathbb R^d} \nabla f\cdot \overline{\nabla f} + V^{1/2} f\cdot \overline{V^{1/2}f}=\int_{\mathbb R^d} (L^{1/2} f )\cdot \overline{(L^{1/2} f)}=\| L^{1/2} f\|_{L^2(\mathbb R^d)}^2.
\end{split}\end{equation}

Before we  turn to prove \eqref{eq2.51} a comment is in order. The $L^1$-bounds for the operators $(-\Delta)^{1/2}L^{-1/2}$ and $L^{1/2}(-\Delta)^{-1/2}$ were studied in \cite{DZ} under the additional assumption that $V$ is Green bounded, that is,
$$\sup_{x\in\mathbb R^d} \int_{\mathbb R^d} |x-y|^{2-d}V(y)\, dy <\infty. $$
However,  for  the proof of the uniform bound \eqref{eq2.51}, it suffices to utilize Lemma \ref{int1}. Indeed, using the definitions of fractional powers of operators and   the perturbation formula \eqref{eq:pert},  we get
\begin{equation}\label{eq2.8}
  \begin{split}
   &(-\Delta)^{1\slash 2} L^{-1\slash 2} f (x) = c_2  \int_0^\infty (H_t - I) L^{-1\slash 2} f(x)\frac{dt}{t^{3\slash 2}}\\
   & = c_2 \int_0^\infty (H_t - K_t) L^{-1\slash 2} f(x)\frac{dt}{t^{3\slash 2}} +
    c_2\int_0^\infty (K_t - I) L^{-1\slash 2} f(x)\frac{dt}{t^{3\slash 2}}\\
    & = c_2\int_0^\infty \int_0^t \int_{\mathbb R^d}\int_{\mathbb R^d}  h_{t-s}(x-z)V(z)k_s(z,y) (L^{-1\slash 2}f)(y) \, dy\, dz\, ds \frac{dt}{t^{3\slash 2}}  +f(x). \\
  \end{split}
 \end{equation}
 Consider the integral kernel $W(x,u)$ of the operator
 $$f\mapsto \int_0^\infty \int_0^t \int_{\mathbb R^d}\int_{\mathbb R^d}  h_{t-s}(x-z)V(z)k_s(z,y) (L^{-1\slash 2}f)(y) \, dy\, dz\, ds \frac{dt}{t^{3\slash 2}},$$ that is,
 \begin{equation}\label{eq:W}
     W(x,u)=\int_0^\infty \int_0^t \int_{\mathbb R^d}\int_{\mathbb R^d} h_{t-s}(x-z)V(z)k_s(z,y) \tilde \Gamma (y,u) \, dy\, dz\, ds \frac{dt}{t^{3\slash 2}}.
 \end{equation}
 Clearly $0\leq W(x,u)$, since the integrand is non-negative. Integration of  \eqref{eq:W}  with respect to $dx$ and the use of the Fubini theorem  and \eqref{composition}  lead to
 \begin{equation}\begin{split}\label{eq2.88}
\int_{\mathbb R^d} W(x,u)\, dx & = \int_0^\infty \int_0^t \int_{\mathbb R^d}\int_{\mathbb R^d} V(z)k_s(z,y) \tilde \Gamma (y,u) \, dy\, dz\, ds \frac{dt}{t^{3\slash 2}}\\
&=  2 \int_0^\infty  \int_{\mathbb R^d}\int_{\mathbb R^d}  V(z)k_s(z,y) \widetilde\Gamma (y,u) \, dy\, dz\,
\frac{ds}{\sqrt{s}} \\
&= 2c_1^{-1} \int_{\mathbb R^d}\int_{\mathbb R^d}  V(z)\widetilde\Gamma (z,y) \widetilde\Gamma (y,u)\, dy\, dz \\
&= 2c_1^{-1}\int_{\mathbb R^d} V(z) \Gamma (z,u)  dz\\
& \leq  2c_1^{-1} ,\\
 \end{split}
 \end{equation}
  where  in the last inequality we have used Lemma \ref{int1}. Taking  \eqref{eq2.8} together with \eqref{eq2.88}, we obtain \eqref{eq2.51}.
 \end{proof}

 The following corollary is a direct consequence  of Lemma \ref{lemma1} and the Riesz--Thorin interpolation theorem.
 \begin{coro}\label{coro}
     \label{Lp} For $1\leq p\leq 2$, we have
     $$ \|(- \Delta)^{1/2}L^{-1/2} f\|_{L^p(\mathbb R^d)}\leq 2^{(2-p)/p}\| f\|_{L^p(\mathbb R^d)}.$$
 \end{coro}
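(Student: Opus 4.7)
The plan is to apply the Riesz--Thorin interpolation theorem to the two endpoint estimates furnished by Lemma~\ref{lemma1}, namely the $L^2 \to L^2$ bound with constant $1$ from \eqref{eq:L2} and the $L^1 \to L^1$ bound with constant $2$ from \eqref{eq2.51}. Since $(-\Delta)^{1/2}L^{-1/2}$ is linear and, by Lemma~\ref{lemma1}, defined on the dense subspace $L^1(\mathbb R^d)\cap L^2(\mathbb R^d)$ with continuous extensions to $L^1$ and to $L^2$, the hypotheses of Riesz--Thorin are satisfied.

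The interpolation parameter $\theta\in [0,1]$ corresponding to a target exponent $p\in [1,2]$ is determined by
\begin{equation*}
\frac{1}{p}=\frac{1-\theta}{1}+\frac{\theta}{2},
\end{equation*}
which yields $\theta=2(p-1)/p$ and $1-\theta=(2-p)/p$. The Riesz--Thorin theorem then gives the intermediate bound
\begin{equation*}
\|(-\Delta)^{1/2}L^{-1/2}f\|_{L^p(\mathbb R^d)}\leq 2^{1-\theta}\cdot 1^{\theta}\,\|f\|_{L^p(\mathbb R^d)}=2^{(2-p)/p}\|f\|_{L^p(\mathbb R^d)},
\end{equation*}
which is the desired inequality. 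The endpoint values $p=1$ and $p=2$ recover \eqref{eq2.51} and \eqref{eq:L2} exactly, providing a sanity check on the constant.

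There is no real obstacle here beyond the bookkeeping of the interpolation exponent; the work has already been done in proving Lemma~\ref{lemma1}. The only point requiring a brief remark is that the operator $(-\Delta)^{1/2}L^{-1/2}$ is initially defined on a class of ``reasonable'' functions (for instance, $f\in L^1\cap L^2$ with $L^{-1/2}f$ in the form domain of $-\Delta$), and the estimates in Lemma~\ref{lemma1} allow unique bounded extensions to all of $L^1$ and $L^2$; these extensions agree on $L^1\cap L^2$, so Riesz--Thorin applies directly.
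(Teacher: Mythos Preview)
Your proof is correct and is exactly the approach the paper takes: the paper states the corollary as ``a direct consequence of Lemma~\ref{lemma1} and the Riesz--Thorin interpolation theorem,'' and your write-up simply fills in the interpolation bookkeeping (and is more detailed than the paper's one-line justification).
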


\begin{proof}[Proof of Theorem \ref{teo}] Observe that
\begin{equation}\label{eq:Riesz-Riesz}
\mathcal R_j=R_j ((-\Delta)^{1/2} L^{-1/2}).
\end{equation}
 Thus the theorem follows from  Corollary \ref{coro} and \eqref{eq:Stein}.
\end{proof}

\begin{remark}
    Let us note that from \eqref{eq2.51} and \eqref{eq:Riesz-Riesz} we easily recover the known results about the weak type $(1,1)$ estimates for $\mathcal R_j$ (see e.g. \cite{Sikora2004}).
\end{remark}

 \begin{remark}
     A careful reading of the proofs of Theorems 2.4 and 2.5 of \cite{KW} gives for any fixed $1<p\leq 2$ the bound
     \begin{equation}\label{eqKW}   \| V^{1/2}L^{-1/2} f\|_{L^p(\mathbb R^d)}\leq C_p'\| f\|_{L^p(\mathbb R^d)}\end{equation}
     with $C_p'$ independent of $d$ and $V\geq 0$. Thus, Theorem \ref{teo} together with \eqref{eqKW} give dimension free estimate for the vector $(V^{1/2}L^{-1/2},\frac{\partial}{\partial x_1}L^{-1/2},\dots,\frac{\partial}{\partial x_d}L^{-1/2})$ of the Riesz transforms associated with $L=-\Delta +V$ with $V\geq 0  $, $V\in L^1_{\rm loc}(\mathbb R^d)$.
 \end{remark}

\begin{remark}
    The method described above allows us to obtain, with the same short proof, weak type $(1.1)$ and $L^p$ boundedness, $1<p\leq 2$, of  Riesz transforms associated with Schr\"odinger operators with non-negative potentials  in some other  settings. Examples are the Dunkl-Schr\"odinger-Riesz transforms $T_j\mathcal L^{-1/2}$, where $T_j$ are the Dunkl operators associated with a root system $R$ and a multiplicity function $k\ge 0$ on $\mathbb R^d$, $\mathcal L=-\sum_{j=1}^d T_j^2+V$, $V\geq 0$, $V\in L^1_{\rm loc}(dw)$. Here $dw$ is the associated measure. Such bounds for $T_j\mathcal L^{-1/2}$ were obtained with  different proofs  in \cite{Amri}.
 \end{remark}

\section{Counterexamples}\label{optimal}
In this section we illustrate that for $p\geq 1$, $p\notin (1,2]$, the $L^p$ bounds for $\frac{\partial}{\partial x_j} L^{-1/2}$ and for $V^{1/2}L^{-1/2}$ fail, unless we impose some additional assumptions on $V$.

{\bf Counterexample 1.} { Let $2<p<\infty$ and $d\geq 3$. There is $V\in L^1_{\rm loc} (\mathbb R^d)$, $V\geq 0$,  such that $\frac{\partial}{\partial x_1}L^{-1/2}$ is not bounded on $L^p(\mathbb R^d)$. }

The construction is a straightforward  adaptation of that of Shen \cite[Section 7]{Sh}. For the reader's convenience we provide details. Write  $\mathbb R^d=\mathbb R^2\times \mathbb R^{d-2}\ni (x_1,x_2,x')=x$, $x_1,x_2\in\mathbb R$, $x'\in\mathbb R^{d-2}$. For a fixed   $0<\varepsilon<1-2/p$, we set
$$ V(x)=V(x_1,x_2,x')=(x_1^2+x_2^2)^{(\varepsilon -2)/2}, \quad v(x_1,x_2,x')=\sum_{m=0}^\infty \frac{(x_1^2+x_2^2)^{\varepsilon m/2}}{\varepsilon^{2m}(m!)^2}.  $$
Then $  (-\Delta+V)v=0$. Let $\phi\in C_c^{\infty}(\mathbb R^d)$, $\phi (x)=1$ for $|x|\leq 1$. Put $u(x)=\phi(x)v(x)$, $g=(-\Delta +V)u$. Then $g=-v\Delta \phi -2 \nabla v\cdot \nabla \phi\in C_c^\infty(\mathbb R^d)$ and $\frac{\partial}{\partial x_1} u\notin L^p(\mathbb R^d)$, because
$$ \Big|\frac{\partial}{\partial x_1} u(x_1,x_2,x')\Big|\sim  |x_1|(x_1^2+x_2^2)^{\varepsilon/2-1} \quad \text{near zero}.$$
If $\frac{\partial}{\partial x_1}L^{-1/2}$ were bounded on $L^p(\mathbb R^d)$, then, by \eqref{eq22.1},
\begin{equation*}
    \begin{split}
        \Big\| \frac{\partial}{\partial x_1} u\Big\|_{L^p}^p =\Big\| \frac{\partial}{\partial x_1} L^{-1/2} L^{-1/2} g\Big\|_{L^p} \leq C\| L^{-1/2} g\|_{L^p}^p \leq C_g\int_{\mathbb R^d}(1+|x|)^{(1-d)p} \,dx <\infty
    \end{split}
\end{equation*}
and we arrive at a contradiction.

{\bf Counterexample 2.} Suppose $2<p<\infty$ and $d\geq 3$. There is $V\in L^1_{\rm loc} (\mathbb R^d)$, $V\geq 0$,  such that $V^{1/2}L^{-1/2}$ is not bounded on $L^p(\mathbb R^d)$.

Indeed, let $V(x)=V(x_1,x')=|x_1|^{-2/p}\chi_{B(0,1)}(x)$, $x=(x_1,x')\in \mathbb R\times \mathbb R^{d-1}$, $B(0,1)=\{x\in\mathbb R^d: |x|<1\}$. Then the integral kernel  $k_t(x,y)$ of the semigroup $e^{-tL}$, $L=-\Delta+V$, satisfies the upper and lower  Gaussian bounds, that is, there are constants  $C,c>0$ such that
\begin{equation}
    \label{Gaussian-bounds}
    C^{-1} h_{ct} (x-y)\leq k_t(x,y)\leq h_t(x-y)
\end{equation}
 (see \cite[Section 5]{BDS}). Consequently,
\begin{equation}\label{similar} \tilde \Gamma(x,y)\sim \tilde \Gamma_0(x-y)=c_d|x-y|^{1-d}.
\end{equation}
Let $f(x)=\chi_{B(0,1)}(x)$. Then $V^{1/2}L^{-1/2}f\notin L^p(\mathbb R^d)$, because, thanks to \eqref{similar},
\begin{equation*}
    \begin{split}
        V^{1/2}(x)L^{-1/2}f(x)=|x_1|^{-1/p}\chi_{B(0,1)}(x)\int_{B(0,1)} \tilde \Gamma(x,y)\, dy \geq c'|x_1|^{-1/p}\chi_{B(0,1)}(x).
    \end{split}
\end{equation*}

{\bf Counterexample 3.}  Now we discuss boundedness of $V^{1/2}L^{-1/2}$ on $L^1(\mathbb R^d)$. Suppose  $d\geq 3$. There is $V\in L^1_{\rm loc} (\mathbb R^d)$, $V\geq 0$,  such that $V^{1/2}L^{-1/2}$ is not bounded on $L^1(\mathbb R^d)$.

To see this, let $V(x)=(1+|x|)^{-2}\big\{ \ln (4+|x|)\big\}^{-2}.$ It is not difficult to check that  $V$ is Green bounded, that is,
\begin{equation}\label{Green}
\sup_{x\in\mathbb R^d} \int_{\mathbb R^d} V(y)|x-y|^{2-d}\, dy<\infty.
\end{equation}
It is well known (see, e.g. \cite{Semenov}) that   \eqref{Green} is equivalent to  the upper and lower Gaussian bounds \eqref{Gaussian-bounds} for the kernel $k_t(x,y)$ of the semigroup $e^{-t(-\Delta +V)}$. Hence \eqref{similar} holds. Now let us consider  $f(x)=\chi_{B(0,1)}(x)$. Then $V^{1/2}L^{-1/2}f$ is not integrable at infinity, since for $|x|>100$, by \eqref{similar}, there is $c>0$ such that
\begin{equation}
    \begin{split}
        V^{1/2} L^{-1/2} f(x)\geq c (1+|x|)^{-1} \big\{ \ln (4+|x|)\big\}^{-1}|x|^{1-d}.
    \end{split}
\end{equation}

\

% BibTeX users please use one of
%\bibliographystyle{spbasic}      % basic style, author-year citations
%\bibliographystyle{spmpsci}      % mathematics and physical sciences
%\bibliographystyle{spphys}       % APS-like style for physics
%\bibliography{}   % name your BibTeX data base

% Non-BibTeX users please use

\end{document}